\documentclass[12pt]{amsart}
\usepackage{amssymb,latexsym}

\newtheorem{thm}{Theorem}
\newtheorem{conjecture}{Conjecture}

\newtheorem{lem}[thm]{Lemma}

\newtheorem{bc}[thm]{Brown's Filtration Criterion}

%%%%%%%%%%%%%%%%% mathbb %%%%%%%%%%%%%%%
\def\bbz{\mathcal{O}_K}
\def\bbq{K}

\def\bbr{\mathbb{R}}

%%%%%%%%%%%%%%%%% mathbf %%%%%%%%%%%%%%%%
\def\bfA{\mathbf{A}}
\def\bfG{\mathbf{G}}
\def\bfH{\mathbf{H}}
\def\bfM{\mathbf{M}}
\def\bfP{\mathbf{P}}
\def\bfQ{\mathbf{Q}}
\def\bfS{\mathbf{S}}
\def\bfT{\mathbf{T}}
\def\bfU{\mathbf{U}}
\def\SL{{\rm{\mathbf{SL}}}}

%%%%%%%%%%%%%%%%% abbre %%%%%%%%%%%%%%%%%%

\def\G{\Gamma}
\def\lq{\bbq((t^{-1}))}
\def\slp{{\mathbf{SL}_n(\mathcal{O}_K[t])}}
\def\sll{{\mathbf{SL}_n(K((t^{-1})))}}
\def\slr{{\mathbf{SL}_n(K[t])}}
\def\slt{{\mathbf{SL}_n(K[[t^{-1}]])}}

\title[polynomial points of simple groups over number fields]
{On presentations of integer  polynomial points of simple groups over number fields}

\thanks{The writing of this paper was supported in part by NSF Grant No. DMS-0905891. }

\author{Amir Mohammadi \& Kevin Wortman}

\begin{document}

\maketitle

 In this paper we prove the following

\begin{thm}\label{t;main}
Let $K$ be a number field and let $\bbz$ be its ring of integers. Let $\bfG$ be a connected, noncommutative, absolutely  almost simple algebraic $\bbq$-group. If the $\bbq$-rank of $\bfG$ equals $2$, then $\bfG(\bbz[t])$ is not finitely presented.
\end{thm}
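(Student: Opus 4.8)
The strategy is to make $\G:=\bfG(\bbz[t])$ act on a contractible complex with a $\G$-invariant filtration by $\G$-cocompact subcomplexes whose cell stabilizers are of type $F_\infty$, and then to invoke Brown's Filtration Criterion: for such an action $\G$ is of type $F_2$ --- hence finitely presented --- if and only if the filtration is essentially $1$-connected. The proof then reduces to showing the filtration is \emph{not} essentially $1$-connected, the obstruction being the failure of simple connectivity of horospheres in a $2$-dimensional Euclidean building.

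First I would set up the space. Let $X$ be the Bruhat--Tits building of $\bfG(\lq)$, where $\lq=\bbq((t^{-1}))$ is the completion of $\bbq(t)$ at $t=\infty$. Since $\bbq$ is algebraically closed in $\lq$, the $\lq$-rank of $\bfG$ equals its $\bbq$-rank, namely $2$; so $X$ is a thick Euclidean building of dimension $2$ (not locally finite: its residue field is $\bbq$). The inclusions $\bbz[t]\subseteq\bbq[t]\subseteq\lq$ make $\G$ discrete in $\bfG(\lq)$, so $\G$ acts on $X$. To also handle the non-cocompactness coming from the finite places of $\bbq$, i.e.\ from the arithmetic of $\bbz$, I would instead work with $\widehat X=X\times\prod_{v\mid\infty}X_v$, where $X_v$ is the symmetric space or building of $\bfG(\bbq_v)$ for $v$ archimedean: then $\widehat X$ is contractible, $\G$ embeds discretely into $\bfG(\lq)\times\prod_{v\mid\infty}\bfG(\bbq_v)$, and $\G$ acts on $\widehat X$.

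The core input, and the step I expect to be the main obstacle, is the reduction theory for this action. Combining reduction theory for the place $t=\infty$ (in the spirit of Harder and Soul\'e) with Borel--Harish-Chandra reduction theory at the archimedean places, one should obtain a $\G$-cocompact core $\widehat Z\subseteq\widehat X$ whose complement is a $\G$-invariant disjoint union of horoballs, of two kinds: \emph{arithmetic} horoballs, whose stabilizers are commensurable with arithmetic subgroups of parabolic $\bbq$-subgroups of $\bfG$, hence of type $F_\infty$ by Borel--Serre; and \emph{$t=\infty$} horoballs, whose stabilizers involve the infinitely generated groups $\bfU(\bbz[t])$, $\bfU$ a unipotent radical. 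Choosing a $\G$-invariant height function $h\colon\widehat X\to[0,\infty)$ adapted to the $t=\infty$ directions and truncating only the $t=\infty$ horoballs yields a filtration $\widehat X=\bigcup_{r\ge0}\widehat X_{\le r}$, $\widehat X_{\le r}=h^{-1}([0,r])$, by $\G$-cocompact subcomplexes all of whose cell stabilizers are of type $F_\infty$ (the arithmetic non-cocompactness having been absorbed into $\widehat Z$). This is exactly where passing from $\mathbb{F}_q$ or $\mathbb{Z}$ to a general ring of integers $\bbz$ costs work: classical reduction theory for $\bfG(\bbz)$ must be woven into the $\lq$-picture so that arithmetic non-cocompactness is traded for $F_\infty$ stabilizers.

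Granting this, I would finish by analyzing the maps $\widehat X_{\le r}\hookrightarrow\widehat X_{\le r'}$ via Bestvina--Brady Morse theory. The descending links for $h$ are assembled from horospheres in the $2$-dimensional building $X$ and from the descending links of the $\bfG(\bbz)$-action on $\prod_{v\mid\infty}X_v$; the latter are highly connected precisely because $\bfG(\bbz)$ is of type $F_\infty$, so the connectivity of the descending links is controlled by that of the horospheres in $X$. I would then invoke the Bux--Wortman theorem on connectivity of horospheres in Euclidean buildings: in a thick $d$-dimensional Euclidean building a horosphere is $(d-2)$-connected but not $(d-1)$-connected, so for $d=2$ it is connected but not simply connected. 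Hence $\{\widehat X_{\le r}\}$ is essentially $0$-connected --- so $\G$ is finitely generated --- but not essentially $1$-connected: just as disconnectedness of horospheres in a tree obstructs finite generation in the rank-one case, the failure of simple connectivity of the $2$-dimensional horospheres keeps the direct system $\{\pi_1(\widehat X_{\le r})\}$ from being essentially trivial. By Brown's Filtration Criterion, $\G=\bfG(\bbz[t])$ is then not of type $F_2$, i.e.\ not finitely presented. The remaining delicate point is to make the Morse-theoretic identification of descending links with horospheres precise enough that the Bux--Wortman non-connectivity statement produces genuine, persistent $\pi_1$-classes rather than merely an a priori bound.
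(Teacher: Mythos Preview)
Your overall architecture---Brown's criterion applied to a filtration of a contractible $\G$-complex with good stabilizers---matches the paper, but the implementation diverges substantially, and your version leans on two black boxes that the paper avoids entirely.

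First, the product $\widehat X = X \times \prod_{v\mid\infty} X_v$ is unnecessary. The paper acts on $X$ alone and shows directly (its Lemma~\ref{l;stab}) that cell stabilizers of $\G$ in $X$ are arithmetic groups over $\bbz$, hence $FP_\infty$ by Borel--Serre; the ``arithmetic non-cocompactness'' you worry about is absorbed into the stabilizers, not into extra factors of the space. Second, and more seriously, the paper uses \emph{no} reduction theory for $\G$ on $X$ and no general horosphere-connectivity theorem. You correctly flag the reduction theory as the main obstacle, and it is a genuine gap: a clean reduction theory for $\bfG(\bbz[t])$ mixing the $t=\infty$ place with the archimedean places of $K$ is not available off the shelf, and the ``Bux--Wortman theorem on connectivity of horospheres'' you invoke, especially the non-simple-connectivity direction, is not a citable result in this generality---it would itself require producing explicit $\pi_1$-witnesses.

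The paper produces those witnesses by hand, bypassing both issues. It fixes a maximal $K$-parabolic $\bfP=\bfU\,Z_\bfG(\bfT)$, builds a hyperbolic element $b\in\bfH(\bbz[t])$ in the rank-one Levi factor $\bfH$ (via a ping-pong lemma on the tree $X_H$), and picks $u\in\bfU(\bbz)$ generic enough to fix exactly two adjacent chambers at infinity of an apartment $\mathcal{A}$. The conjugates $g_n=b^{-n}ub^n$ and $h_n=b^nub^{-n}$ lie in the nilpotent group $\bfU$, so an iterated commutator $[[\ldots[g_n,h_n],\ldots],h_n]$ of bounded length is trivial; tracing this word through $\langle g_n,h_n\rangle$-translates of a fixed segment $\sigma_n\subset\mathcal{A}$ yields an explicit $1$-cycle $c_n$ lying in a single $\G$-orbit $X_0=\G\sigma_{n_0}$. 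A Mahler-type lemma shows that the ray $a^ne_0$ in the $\bfT$-direction escapes every $\G$-cocompact set $X_i$, and since $X$ is $2$-dimensional and contractible, any filling disk for $c_n$ must contain the interior point $a^{j_i}e_0$ of the triangle $\Delta_n\subset\mathcal{A}$ that $c_n$ encircles. Thus $[c_n]\neq 0$ in $\widetilde H_1(X_i)$ for every $i$, and Brown's criterion gives that $\G$ is not $FP_2$. In short, the paper trades your two hardest structural inputs for a concrete algebraic construction (nilpotency of $\bfU$ plus a hyperbolic element in the Levi) together with the elementary fact that in a contractible $2$-complex a loop cannot be filled without covering any point it encloses.
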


Actually, we will prove a slightly stronger version of Theorem~\ref{t;main} by showing that if $\bfG(\bbz[t])$ is as in Theorem~\ref{t;main}, then $\bfG(\bbz[t])$ is not of type $FP_2$.

\subsection{Related results}
Krsti\'{c}-McCool proved that  ${\rm GL_3}(A)$ is not finitely presented if there is an epimorphism from $A$ to $F[t]$ for some field $F$ \cite{K-M}. 

Suslin proved that ${\rm SL _n}(A[t_1,\ldots , t_k])$ is generated by elemetary matrices if $n \geq 3$, $A$ is a regular ring, and $K_1(A)\cong A^\times$  \cite{Su}. Grunewald-Mennicke-Vaserstein proved that ${\rm Sp _{2n}}(A[t_1,\ldots , t_k])$ is generated by elementary matrices if $n \geq 2$ and $A$ is a Euclidean ring or a local principal ideal ring \cite{G-M-V}.

In Bux-Mohammadi-Wortman, it's shown that ${\rm SL_n}(\mathbb{Z}[t])$ is not of type $FP_{n-1}$ \cite{BMW}. The case when $n=3$  is a special case of Theorem~\ref{t;main}.

While most of the results listed above allow for more general rings than $\mathcal{O}_K[t]$, the result of this paper, and the techniques used to prove it, are distinguished by their applicability to a class of semisimple groups that extends beyond special linear and symplectic groups.

\section{Preliminary and notation}~\label{sec;notation}

Throughout the remainder, we let $\bf G$ be as in Theorem~\ref{t;main} and we let $\G=\bfG(\bbz[t]).$

Let $L$ be an algebraically closed field containing $\lq$ fixed once and for all. In the the sequel the Zarsiki topology is defined with this fixed algebraically closed field in mind.  

Let $\bfS$ be a maximal $\bbq$-split torus of $\bfG.$ Let $\{\alpha,\beta\}$ be a set of simple $\bbq$-roots for $(\bfG,\bfS),$ and define $\bfT=(\ker(\alpha))^\circ,$ the connected component containing the identity.

 Let $\bfP$ be a maximal $\bbq$-parabolic subgroup of $\bfG$ that has $Z_\bfG(\bfT)$  as a Levi subgroup where $Z_\bfG(\bfT)$ denotes the centralizer of $\bfT$ in $\bfG.$
 Let $\bfU$ be unipotent radical of $\bfP.$ We have $\bfP={\bfU}Z_\bfG(\bfT)$. We can further write 
$$\bfP=\bfU\bfH\bfM\bfT$$
where  $\bfH\leq Z_\bfG(\bfT)$ is a simple $\bbq$-group of $\bbq$-rank 1 and $\bfM$ is a $\bbq$-anisotropic torus contained in the center of $ Z_\bfG(\bfT)$.

If $x\in\lq$ is algebraic over $\bbq$ then $x\in\bbq$, hence $\bfG$ has $\lq$-rank $2$ as well and $\bfP$ is a $\lq$-maximal parabolic of $\bfG$. It also follows that $\bf H$ has $\lq$-rank $1$ and that $\bf M$ is $\lq$-anisotropic.

We let $G,$ $S,$ $P,$ $U,$ $M,$ $H$ and $T$ denote the $\lq$-points of $\bfG,$ $\bfS,$ $\bfP,$ $\bfU,$ $\bfM,$ $\bfH,$ and $\bfT,$ respectively.

Let $X$ denote the Bruhat-Tits building associated to $G.$ This is a $2$-dimensional simplicial complex, and the apartments (maximal flats) correspond to maximal $\lq$-split tori.  

We fix once and for all a $\bbq$-embedding of $\bfG$ in some $\SL_n.$ Using this embedding we realize $\bfG(\bbq[t])$ and $\G$ as subgroups of $\SL(\bbq[t])$ and $\SL(\bbz[t])$ respectively. This embedding also gives an isometric embedding of $X$ into $\tilde{A}_{n-1},$ the building of $\SL_n(\lq)$;  see~\cite{La}.    

\section{Stabilizers of the $\G$-action on its Euclidean building}\label{sec;stab}  

\begin{lem}\label{l;stab} If $X$ is the Euclidean building for $G$, then the $\G$ stabilizers of cells in $X$ are $FP_m$ for all $m$. 
\end{lem}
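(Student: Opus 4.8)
Recall that being of type $FP_m$ for all $m$ is the same as being of type $FP_\infty$, a property that passes between a group and its finite--index subgroups and is closed under group extensions. Write $\G_\sigma=\mathrm{Stab}_\G(\sigma)$ and $G_\sigma=\mathrm{Stab}_G(\sigma)$ for the stabilizers of a cell $\sigma$ of $X$. The plan is to show that, after replacing $\G_\sigma$ by a finite--index subgroup if necessary, there is an extension
\[
1 \longrightarrow N_\sigma \longrightarrow \G_\sigma \longrightarrow Q_\sigma \longrightarrow 1
\]
in which $N_\sigma$ is a finitely generated torsion--free nilpotent group and $Q_\sigma$ is an arithmetic subgroup of a linear algebraic $\bbq$--group. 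Granting this, the Lemma follows: finitely generated nilpotent groups admit finite classifying spaces, arithmetic subgroups of linear algebraic $\bbq$--groups are of type $F_\infty$ (for reductive groups this is Borel--Serre; in general an arithmetic group is, up to finite index, a semidirect product of the integer points of a unipotent group with an arithmetic subgroup of a reductive group), and $F_\infty$, hence $FP_\infty$, is closed under extensions.

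To describe $\G_\sigma$ I would first invoke Bruhat--Tits theory: $G_\sigma$ is a bounded subgroup of $G$, equal to $\underline{\bfG}_\sigma(\bbq[[t^{-1}]])$ for a smooth affine $\bbq[[t^{-1}]]$--group scheme $\underline{\bfG}_\sigma$ whose generic fiber is $\bfG$ over $\lq$, so that $\G_\sigma=\bfG(\intpol)\cap\underline{\bfG}_\sigma(\bbq[[t^{-1}]])$. Using the fixed embedding $\bfG\hookrightarrow\SL_n$ and the induced isometric embedding $X\hookrightarrow\tilde A_{n-1}$ (see~\cite{La}), the subgroup $G_\sigma$ is conjugate into $\slt$ by some $g_\sigma\in\SL_n(\lq)$ depending on $\sigma$. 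Since $\intpol\cap\bbq[[t^{-1}]]=\bbz$ and a polynomial in $t$ has no negative powers of $t$, a short computation with $g_\sigma$ then shows that every element of $\G_\sigma$ is a matrix all of whose entries are polynomials of $t$--degree at most a constant $C=C(\sigma)$.

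Next I would introduce the congruence homomorphisms $\pi_k\colon\underline{\bfG}_\sigma(\bbq[[t^{-1}]])\to\underline{\bfG}_\sigma(\bbq[[t^{-1}]]/(t^{-k}))$, whose targets are the $\bbq$--points of linear algebraic $\bbq$--groups (jet groups) and whose kernels are pro--unipotent congruence subgroups, and fix $k$ large relative to $C(\sigma)$. Then $N_\sigma:=\G_\sigma\cap\ker\pi_k$ consists of unipotent matrices with polynomial entries of bounded $t$--degree, and one checks it is finitely generated torsion--free nilpotent; while $Q_\sigma:=\pi_k(\G_\sigma)$ is a subgroup of the jet group, built from the reductions of the (bounded--degree integer--polynomial) elements of $\G_\sigma$, which is contained in an arithmetic subgroup of the jet group and — this is the crux — of finite index there. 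This produces the extension above, and since $\sigma$ was arbitrary the Lemma follows.

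The hard part will be exactly the finite--index assertion for $Q_\sigma$: that the ``reductive part'' of the cell stabilizer is no smaller than it should be. This is a reduction--theory statement for $\bfG$ over $\intpol$ relative to the place $t=\infty$: one must produce enough elements of $\bfG(\intpol)$ fixing $\sigma$ whose reductions fill out a finite--index subgroup of the $\bbz$--points of the special fiber of $\underline{\bfG}_\sigma$. I expect to do this by conjugating the evident copy of $\bfG(\bbz)\le\G$ inside $G_\sigma$ by suitable elements of $\bfG(\bbq[t])\cap G_\sigma$ and appealing to finiteness of class numbers of $\bfG$; the hypothesis that $\bfG$ is absolutely almost simple, so that the reductive groups occurring here are semisimple, is what keeps this step manageable.
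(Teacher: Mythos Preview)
Your proposal has a genuine gap at exactly the point you flag: the finite--index assertion for $Q_\sigma$ inside the arithmetic subgroup of the jet group. You say you ``expect'' to do this by conjugating a copy of $\bfG(\bbz)$ and appealing to finiteness of class numbers, but this is not carried out, and it is not clear it works as stated---the jet groups need not be semisimple (their reductive quotients are, but the kernels of successive $\pi_k$ are vector groups), and you would need a genuine reduction theory for $\bfG$ over $\intpol$ at the place $t=\infty$. Without this step the extension you write down only exhibits $\G_\sigma$ as an extension of a \emph{subgroup} of an arithmetic group by a nilpotent group, which tells you nothing about $FP_\infty$.

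The paper avoids this difficulty entirely by a much more direct device. After observing (as you do) that elements of $\G_\sigma$ are matrices with polynomial entries of bounded $t$--degree, it simply treats the space $W_N$ of polynomials of degree $\le N$ as an affine space over $K$, so that the set of $n\times n$ matrices with $(i,j)$--entry in $W_{N_{ij}}$ is a finite--dimensional affine $K$--variety, and the condition $\det=1$ (together with the equations defining $\bfG\subset\SL_n$) cuts out an affine algebraic $K$--group $\mathbf{G}_\sigma$. By construction $\mathbf{G}_\sigma(K)$ is the stabilizer of $\sigma$ in $\bfG(K[t])$ and $\mathbf{G}_\sigma(\bbz)$ is the stabilizer in $\bfG(\intpol)$. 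Thus $\G_\sigma$ is, up to a finite--index issue coming from type--preservation, literally an arithmetic group over $\bbz$, and Borel--Serre applies directly. There is no congruence filtration, no jet group, and no ``crux'' to worry about: the Weil--restriction--like passage from polynomials to their coefficient vectors does all the work.
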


\begin{proof}
We first recall the proof of~\cite[Lemma 2]{BMW}. Let $x_0 \in \tilde{A}_{n-1}$ be the vertex stabilized by $\slt$. We
denote a diagonal matrix in $\mathbf{GL_n}(K((t^{-1})))$
with entries $s_1, s_2,..., s_n \in K((t^{-1}))^\times$
by $D(s_1,s_2,...,s_n)$, and we let $\mathfrak{S} \subseteq \tilde{A}_{n-1}$ be
the sector based at $x_0$ and containing vertices of the form
$D(t^{m_1}, t^{m_2},...,t^{m_n})x_0$ where each $m_i \in
\mathbb{Z}$ and $m_1 \geq m_2 \geq ... \geq m_n$.

The sector $\mathfrak{S}$ is a fundamental domain for the action
of $\mathbf{SL_n}(K[t])$ on $\tilde{A}_{n-1}$ (see \cite{So}). In particular, for any vertex $z
\in \tilde{A}_{n-1}$, there is some $h'_z\in \mathbf{SL_n}(K[t])$ and some integers $m_1 \geq
m_2 \geq ... \geq m_n$ with $z=h'_zD_z(t^{m_1},
t^{m_2},...,t^{m_n})x_0$. We let $h_z=h'_zD_z(t^{m_1},
t^{m_2},...,t^{m_n}).$

For any $N \in \mathbb{N}$, let $W_N$ be the $(N+1)$-dimensional
vector space $$W_N=\{\,p(t) \in \mathbb{C}[t] \mid
\text{deg}\big(p(t)\big) \leq N \}$$ which is endowed with the
obvious $K-$structure. If $N_1,\cdots,N_{n^2}$ in
$\mathbb{N}$ are arbitrary then let
$$\mathbf{G}_{\{N_1,\cdots,N_{n^2}\}}=\{\mathbf{x}\in\prod_{i=1}^{n^2}W_{N_i}|
\hspace{1mm}\mbox{det}(\mathbf{x})=1\}$$ where
$\mbox{det}(\mathbf{x})$ is a polynomial in the coordinates of
$\mathbf{x}.$ To be more precise this is obtained from the usual
determinant function when one considers the usual $n\times n$
matrix presentation of $\mathbf{x},$ and calculates the
determinant in $\mathbf{Mat}_n(\mathbb{C}[t]).$

For our choice of vertex $z \in \tilde{A}_{n-1}$ above, the stabilizer of $z$ in
$\sll$ equals $h_z \slt h_z^{-1}.$ And with our fixed choice of
$h_z$, there clearly exist some $N^z_i \in \mathbb{N}$ such that
the stabilizer of the vertex $z$ in $\slr$ is
$\mathbf{G}_{\{N^z_1,\cdots,N^z_{n^2}\}}(K)$.
Furthermore, conditions on $N^z_i$ force a group structure on
$\mathbf{G}_z=\mathbf{G}_{\{N^z_1,\cdots,N^z_{n^2}\}}.$ Therefore,
the stabilizer of $z$ in $\slr$ is the $K$-points of the
affine $K$-group $\mathbf{G}_z$, and the stabilizer of
$z$ in $\slp$ is $\mathbf{G}_z(\mathcal{O}_K)$.

Let $\sigma$ be a cell in $\tilde{A}_{n-1}$. The action of $\slr$ on $\tilde{A}_{n-1}$ is type preserving, so if $\sigma
\subset \mathfrak{S}$ is a simplex with vertices $z_1, z_2,...,z_m$, then
the stabilizer of $\sigma$ in $\slp$ is 
$$\big(\mathbf{G}_{z_1} \cap \cdots \cap\mathbf{G}_{z_m}\big)(\mathcal{O}_K)$$
Which implies that the stabilizer of $\sigma$ in $\G$ is 
$\mathbf{G}_{\sigma} (\bbz) $ where
$\mathbf{G}_{\sigma} =\bfG\cap\mathbf{G}_{z_1} \cap \cdots \cap\mathbf{G}_{z_m}$.

If $\psi \subset X$ is a cell, then we let $\sigma _1, \ldots ,\sigma_k$ be simplices of $\tilde{A}_{n-1}$ such that their union contains $\psi$, and such that their union is contained in the union of any other set of simplices of $\tilde{A}_{n-1}$ that contains $\psi$. 

The group $\G$ may not act on $X$ type-preservingly, but the stabilizer of $\psi$ in $\G$ will contain a finite index subgroup that fixes $\psi$ pointwise. Because $\G$ does act type-preservingly on $\tilde{A}_{n-1}$, we have that the stabilizer of $\psi$ in $\Gamma$ 
contains
$$ \big( \mathbf{G}_{\sigma_1}  \cap \cdots \cap \mathbf{G}_{\sigma_k} \big)(\bbz)$$
as a finite index subgroup. This is an arithmetic group, and Borel-Serre \cite{B-S} proved that any such group is $FP_m$ for all $m.$

\end{proof}

\section{An unbounded ray in $\G \backslash X$}

The group $\G$ does not act cocompactly on $X$. Our next lemma is a generalization of Mahler's compactness criterion, and it will help us identify a ray in $X$ whose projection to $\G \backslash X$ is proper. Our proof is similar to~\cite[Lemma 11]{BMW}.

\begin{lem}\label{l;mahler} 
If $e \in X$, $a \in G$, $u \in \Gamma$ is nontrivial, and $a^{-n}ua^{n}\rightarrow 1$ as $n\to \infty$, then $\{\Gamma a^ne: n\geq0\}\subset\Gamma\backslash X$ is unbounded.
\end{lem}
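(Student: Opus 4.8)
The plan is to argue by contradiction: suppose $\{\Gamma a^n e : n \geq 0\}$ is bounded in $\Gamma \backslash X$. Boundedness means there is a compact set $C \subseteq X$ and elements $\gamma_n \in \Gamma$ with $\gamma_n a^n e \in C$ for all $n \geq 0$. The strategy is to use this to contradict the discreteness of $\Gamma$ in $G$, by showing that the conjugates $(\gamma_n a^n) u (\gamma_n a^n)^{-1}$ form a sequence of distinct nontrivial elements of $\Gamma$ accumulating at $1$. First I would observe that since $e$ and all $\gamma_n a^n e$ lie in a bounded region, and $X$ is a building on which $G$ acts with compact (in fact, parahoric) cell stabilizers, the set $\{\gamma_n a^n\}$ is relatively compact in $G$; more carefully, one passes to a subsequence along which $\gamma_n a^n$ converges in $G$ to some $g_\infty$, using that the point stabilizers are open-compact and the $G$-action on $X$ is proper.

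Next I would examine the conjugates. Set $b_n = \gamma_n a^n$. Then $b_n u b_n^{-1} = \gamma_n (a^n u a^{-n}) \gamma_n^{-1}$, and since $\gamma_n \in \Gamma$ and $\Gamma$ is a group (and $u \in \Gamma$), each $b_n u b_n^{-1}$ lies in $\Gamma$. The hypothesis is $a^{-n} u a^n \to 1$; I should be careful about the direction of the conjugation, but the point is symmetric: replacing $a$ by $a^{-1}$ if necessary, or reading the hypothesis as stating that $a$ contracts $u$ under conjugation in one direction, I get that $a^{n} u a^{-n}$ (the relevant quantity) either tends to $1$ or not — and in the case it does not, one uses instead that $a^{-n}ua^n \to 1$ directly by running the argument with the sequence $a^{-n}e$; in the setup of the paper $a$ will be chosen so that the contracting direction matches $n \to +\infty$ along the ray $a^n e$. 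Granting this, $b_n u b_n^{-1} = g_\infty \cdot (\text{something} \to 1) \cdot g_\infty^{-1} \cdot (\text{correction from } \gamma_n a^n \to g_\infty)$, so $b_n u b_n^{-1} \to 1$ in $G$ along the subsequence.

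Now I would invoke discreteness of $\Gamma = \mathbf{G}(\mathcal{O}_K[t])$ inside $G = \mathbf{G}(K((t^{-1})))$: a sequence in $\Gamma$ converging to $1$ in $G$ must be eventually equal to $1$. Hence $b_n u b_n^{-1} = 1$ for all large $n$ in the subsequence, i.e. $u = 1$, contradicting the hypothesis that $u$ is nontrivial. The discreteness fact itself I would justify by noting that $\mathcal{O}_K[t]$ is discrete in $K((t^{-1}))$ with the valuation topology (a polynomial close to $0$ must be $0$), and this is inherited by the matrix entries under the fixed embedding $\mathbf{G} \hookrightarrow \mathbf{SL}_n$.

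The main obstacle I anticipate is the compactness extraction step: making precise that boundedness of $\{\Gamma a^n e\}$ in $\Gamma \backslash X$ forces $\{\gamma_n a^n\}$ to be relatively compact in $G$. This requires knowing that the $G$-stabilizers of points (or cells) of $X$ are compact and that $G$ acts on $X$ properly — standard for Bruhat–Tits buildings, but one should also handle the fact that $\Gamma$ need not act type-preservingly on $X$ (so "bounded in $\Gamma\backslash X$" should be interpreted via a $\Gamma$-invariant metric, which exists since $X$ is $\mathrm{CAT}(0)$ and $\Gamma$ acts by isometries). A secondary technical point is tracking the direction of conjugation and ensuring the correction terms from $\gamma_n a^n \to g_\infty$ genuinely die in the limit; this is routine once one writes $b_n u b_n^{-1} = (b_n b_m^{-1})(b_m u b_m^{-1})(b_n b_m^{-1})^{-1}$-type comparisons, or simply uses continuity of the conjugation map $G \times G \to G$ together with $a^{-n}ua^n \to 1$.
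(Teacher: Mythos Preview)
Your overall strategy matches the paper's: assume the orbit is bounded, use properness of the $G$-action on $X$ to get $\gamma_n a^n$ lying in a bounded set $B \subseteq G$, and then contradict discreteness of $\Gamma$ in $G$. However, there is a real error in your conjugation bookkeeping. You aim to show $b_n u b_n^{-1}\to 1$ with $b_n=\gamma_n a^n$, but $b_n u b_n^{-1}=\gamma_n(a^n u a^{-n})\gamma_n^{-1}$, whereas the hypothesis concerns $a^{-n}ua^{n}$, not $a^{n}ua^{-n}$; worse, if $b_n\to g_\infty$ then continuity gives $b_n u b_n^{-1}\to g_\infty u g_\infty^{-1}\neq 1$, so the claimed limit is simply false. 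Your suggested repairs (swap $a$ for $a^{-1}$, or run the argument along $a^{-n}e$) change the statement you are trying to prove and do not salvage it.

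The fix --- and this is exactly what the paper does --- is to conjugate by $\gamma_n$ rather than by $b_n$: write
\[
a^{-n}ua^{n}=b_n^{-1}\bigl(\gamma_n u\gamma_n^{-1}\bigr)b_n.
\]
The element $\gamma_n u\gamma_n^{-1}$ lies in $\Gamma-\{1\}$ and hence is bounded away from $1$ by discreteness; since the $b_n$ range over a bounded set $B$, the conjugates $b_n^{-1}(\gamma_n u\gamma_n^{-1})b_n$ are likewise bounded away from $1$, contradicting $a^{-n}ua^{n}\to 1$. Note that no subsequence extraction or limit $b_n\to g_\infty$ is needed once you arrange the conjugation this way.
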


\begin{proof}
Since $G$ acts on $X$ with bounded point stabilizers, it suffices to show that  $\{\Gamma a^n: n\geq0\}\subset\Gamma\backslash G$ is unbounded. 

If $\{\Gamma a^n: n\geq0\}$ is bounded, then it is contained in a set $\Gamma B $ where $B \subset G$ is a bounded set. Thus, for any $a^n$, we have $a^n=\gamma b$ for some $\gamma \in \G$ and $b \in B$. Hence $a^{-n}ua^{n}=b^{-1}\gamma ^{-1}u\gamma b$.

Because $u$ is nontrivial, $\gamma ^{-1}u\gamma \in \G -1$ is bounded away from $1$, and thus 
$b^{-1}\gamma ^{-1}u\gamma b$ is bounded away from $1$. That's a contradiction.
\end{proof}

\section{An unbounded semisimple element in ${\bf H}(\mathcal{O}_K[t])$}\label{sec;sem}

Recall that $\bfH$ has $\lq$-rank 1 (and $\bbq$-rank 1), hence the Bruhat-Tits building of $H$, which will be denoted by $X_H,$ is a tree. Let $\bfS'$ be a maximal $\bbq$-split, thus $\lq$-split, torus of $\bfH$ and let $\bfQ^+$ and $\bfQ^-$ be opposite $\bbq$-parabolic subgroups of $\bfH$ with Levi subgroup $Z_\bfH(\bfS').$ 

We denote the unpotent radical of $\bfQ^\pm$ as $R_u(\bfQ^\pm)$, and we let $Q^\pm=\bfQ^\pm(\lq)$, $R_u(Q^\pm)=R_u(\bfQ^\pm)(\lq)$, and $S'=\bfS'(\lq).$

See~\cite[Proposition 25]{Se} for the next lemma.

\begin{lem}\label{l;aniso-torus}
Let $u^+\in R_u(Q^+)$ and $u^-\in R_u(Q^-)$ and let $F^\pm={\rm{Fix}}_{X_H}(u^\pm).$ Assume that $F^+\cap F^-=\emptyset.$ Then $u^+u^-$ is a hyperbolic isometry of $X_H.$
\end{lem}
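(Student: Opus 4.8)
The plan is to use the standard theory of group actions on trees (Bass–Serre theory), as developed in Serre's book, together with the structure of the two opposite unipotent subgroups $R_u(Q^\pm)$ with respect to the tree $X_H$. The key point is that for a rank-one group $H$ acting on its Bruhat–Tits tree $X_H$, each nontrivial $u^\pm \in R_u(Q^\pm)$ is an elliptic (non-hyperbolic) isometry, so that $F^\pm = \mathrm{Fix}_{X_H}(u^\pm)$ is a nonempty subtree of $X_H$. The hypothesis $F^+ \cap F^- = \emptyset$ then forces the product $u^+u^-$ to translate along the geodesic joining these two disjoint subtrees.

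First I would recall why $F^\pm \neq \emptyset$: the group $R_u(Q^+)$ is contained in the stabilizer of the end of $X_H$ fixed by $Q^+$, and more precisely each element of a unipotent radical of a parabolic in a rank-one group over a local (or here, complete discretely valued) field fixes a sub-horoball, hence in particular fixes points of $X_H$; alternatively this is exactly the content of the cited \cite[Proposition 25]{Se}. Second, since $F^+$ and $F^-$ are nonempty closed subtrees with $F^+ \cap F^- = \emptyset$, there is a unique geodesic segment $[p,q]$ in $X_H$ with $p \in F^+$, $q \in F^-$ realizing the distance $d(F^+,F^-) > 0$, meeting $F^+$ only at $p$ and $F^-$ only at $q$. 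Third, I would analyze the action of $g = u^+u^-$ on this configuration: starting from $q \in F^-$, we have $u^- q = q$, and then $u^+$ moves $q$; I would track the image $g\cdot q$ and the midpoint or the segment $[p,q]$ to show that $g$ does not fix any vertex and that $\ell(g) = \min_{x} d(x, gx) > 0$, i.e. $g$ is hyperbolic. The cleanest way is the classical ``ping-pong on a tree'' lemma: if $a$ fixes a subtree $F^+$ and $b$ fixes a subtree $F^-$ with $F^+ \cap F^- = \emptyset$, and the bridge $[p,q]$ between them is nondegenerate, then $ba$ (or $ab$) is hyperbolic with axis passing through the bridge, with translation length at least $2\,d(F^+,F^-)$; this is a standard fact found in Serre's \emph{Trees}.

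The main obstacle — and really the only non-formal step — is establishing that $u^\pm$ is elliptic with nonempty fixed-point set $F^\pm$, so that the combinatorial ping-pong argument can even begin; but this is precisely what is quoted from \cite[Proposition 25]{Se}, so in the write-up I would simply invoke it. After that, the argument is purely a statement about isometries of trees: I would phrase it as ``two elliptic isometries whose fixed subtrees are disjoint have a hyperbolic product,'' cite the relevant proposition in Serre, and check the one small compatibility point, namely that the $p,q$ realizing the bridge behave correctly under $u^+$ and $u^-$ (specifically that $u^+$ fixes $p$ and $u^-$ fixes $q$, so neither can ``undo'' the displacement created by the other along $[p,q]$). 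Concretely, one shows $d(q, u^+u^- q) = d(q, u^+ q) \ge 2\,d(p,q) > 0$ and that equality of displacement is achieved on a line through the bridge, which exhibits the axis and confirms hyperbolicity.
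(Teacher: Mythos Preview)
Your proposal is correct and follows essentially the same approach as the paper: both arguments implement the standard tree-theoretic fact (from Serre's \emph{Trees}) that a product of two elliptic isometries with disjoint fixed subtrees is hyperbolic, by analyzing the bridge between $F^+$ and $F^-$. The only cosmetic difference is that the paper tracks an oriented edge at the midpoint of the bridge and derives a contradiction with ellipticity, whereas you compute the displacement $d(q,u^+u^-q)\ge 2\,d(F^+,F^-)$ and identify the axis directly; these are two phrasings of the same computation.
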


\begin{proof}

Let $x$ be the midpoint between $F^+$ and $F^-$.
Let $p_1$ be the path between $x$ and $F^+$ and let $p_2$  be the path between $x$ and $F^-$, and let $\psi$ be an edge containing $x$, contained in $p_1 \cup p_2$, not contained in $p_2$, and oriented towards $F^+$.

Notice that $u^-p_2 \cup p_2$ is an embedded path between $x$ and $u^-x$ and that $p_1 \cup u^+p_1 \cup u^+p_2 \cup u^+u^-p_2$ is an embedded path between $x$ and $u^+u^-x$. The edge $u^+u^-\psi$ is a continuation of the latter path that is oriented away from from both $u^+u^-x$ and $x$.

If $u^+u^-$ is elliptic, then it fixes the midpoint of the path between $x$ and $u^+u^-x$ and maps $\psi$ to an oriented edge pointed towards $x$. Therefore, $u^+u^-$ is hyperbolic.

\end{proof}

\begin{lem}
There exists elements $u^\pm \in R_u(\bfQ^\pm)(\bbz[t])$ of arbitrarily large norm.
\end{lem}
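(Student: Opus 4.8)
The plan is to exhibit nontrivial $\mathcal{O}_K[t]$-points of the unipotent radicals $R_u(\bfQ^+)$ and $R_u(\bfQ^-)$ whose $t$-degrees can be made arbitrarily large, so that the resulting matrix entries (viewed under our fixed embedding $\bfG\hookrightarrow\SL_n$) have unbounded norm in $\lq$. Since $\bfH$ is an absolutely almost simple $\bbq$-group of $\bbq$-rank $1$, the unipotent radical $R_u(\bfQ^+)$ is a nontrivial $\bbq$-group defined over $\bbq$, hence over $\bbz$ after clearing denominators; its $\bbz[t]$-points are Zariski dense. Concretely, $R_u(\bfQ^+)$ is a connected unipotent $\bbq$-group, so as a $\bbq$-variety it is $\bbq$-isomorphic to affine space $\bfA^d$ via a sequence of coordinates adapted to the lower central series. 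The first step is to fix such a $\bbq$-isomorphism $\varphi^+\colon \bfA^d \to R_u(\bfQ^+)$ defined over $\bbz$ (again, after scaling), and similarly $\varphi^-$ for $R_u(\bfQ^-)$.

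Next I would observe that for any $c\in\mathcal{O}_K$ and any $k\ge 0$, the element $u^+_k = \varphi^+(ct^k,0,\ldots,0)$ lies in $R_u(\bfQ^+)(\mathcal{O}_K[t])$, and similarly $u^-_k = \varphi^-(ct^k,0,\ldots,0)\in R_u(\bfQ^-)(\mathcal{O}_K[t])$. The point is that the entries of these matrices, as polynomials in $t$, have degree growing linearly in $k$ (with leading coefficient a fixed nonzero element of $\mathcal{O}_K$, for suitable choice of $c$, e.g. $c=1$), because $\varphi^\pm$ is a polynomial map with $\bbz$-coefficients and the coordinate $(ct^k,0,\ldots,0)$ is itself a nonconstant polynomial. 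Hence the norm of $u^\pm_k$ in $G=\bfG(\lq)$, which is governed by the largest $t$-degree appearing among the matrix entries, tends to infinity with $k$. Taking $u^\pm = u^\pm_k$ for large $k$ gives elements of arbitrarily large norm.

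The main technical point to verify is that the chosen one-parameter families $u^\pm_k$ are genuinely nonconstant in $t$ — i.e. that plugging $ct^k$ into the first coordinate of $\varphi^\pm$ actually produces a matrix whose entries are nonconstant polynomials. This follows because $\varphi^+$ restricted to the first coordinate axis is a nonconstant morphism $\bfA^1 \to R_u(\bfQ^+)\subset\SL_n$ (it is injective, being part of an isomorphism of varieties), so it cannot land in the constant locus; composing with $t\mapsto ct^k$ then yields nonconstant $t$-dependence, and one reads off that the $t$-degree grows at least linearly in $k$. An alternative, more structural route avoids explicit coordinates: pick a highest-weight root vector $E\in\mathrm{Lie}(R_u(\bfQ^+))$ defined over $\bbz$ and set $u^+_k=\exp(ct^k E)$, which is polynomial in $ct^k E$ since $E$ is nilpotent; the same for $R_u(\bfQ^-)$ with a lowest-weight vector. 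Either way, the estimate on the degree — hence on the norm — is the only thing requiring care, and it is elementary once the $\bbz[t]$-rationality of the exponential (or of $\varphi^\pm$) is in hand.
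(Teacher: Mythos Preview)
Your proposal is correct and follows essentially the same approach as the paper: parametrize (a subgroup of) the unipotent radical by affine space via a $\bbq$-isomorphism, clear denominators, and plug in powers of $t$ to obtain $\mathcal{O}_K[t]$-points of unbounded degree. The only cosmetic differences are that the paper works inside a single root subgroup $\mathbf{U}_\alpha\leq R_u(\bfQ^\pm)$ and evaluates at the diagonal point $(Nt^j,\ldots,Nt^j)$ rather than along one coordinate axis, but the substance is the same.
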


\begin{proof}
After perhaps replacing $\alpha$ with $2 \alpha$, there is a root group ${\bf U}_\alpha \leq R_u(\bfQ^\pm)$ and a $\bbq$-isomorphism of algebraic groups
$f: \mathbb{A}^k \rightarrow {\bf U}_\alpha$ 
 for some affine space $\mathbb{A}^k$.
 
 The regular function $f$ is defined by polynomials $f_i \in \bbq [x_1, \ldots , x_k]$. Because $f$ maps the identity element to the identity element, each $f_i$ has a constant term of $0$.
  
 The field of fractions of $\mathcal{O}_K$ is $K$. We let $N$ be the product of the denominators of the coefficients of the $f_i$. Then the image under $f$ of the points $(Nt^j, \ldots , Nt^j)$ forms an unbounded sequence in $j$ of points in ${\bf U}_\alpha (\bbz[t])$.
 
\end{proof}

\begin{lem}\label{l;b}
There exists a hyberbolic isometry $b \in \bfH(\bbz[t])$ of the tree $X_H$.
\end{lem}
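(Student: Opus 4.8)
The plan is to combine the two preceding lemmas. By the previous lemma (the one producing unbounded unipotent elements), we may choose $u^+ \in R_u(\bfQ^+)(\bbz[t])$ and $u^- \in R_u(\bfQ^-)(\bbz[t])$, and we are free to take their norms as large as we like. By Lemma~\ref{l;aniso-torus}, the product $b = u^+ u^-$ will be a hyperbolic isometry of the tree $X_H$ as soon as we know that $F^+ = \mathrm{Fix}_{X_H}(u^+)$ and $F^- = \mathrm{Fix}_{X_H}(u^-)$ are disjoint. Since $b = u^+u^- \in \bfH(\bbz[t])$ automatically, the whole lemma reduces to arranging $F^+ \cap F^- = \emptyset$ by choosing $u^\pm$ large enough.

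So the core of the argument is: as the norm of $u^\pm$ grows, the fixed set $F^\pm$ of $u^\pm$ in $X_H$ recedes to infinity toward the end of $X_H$ determined by $\bfQ^\pm$. More precisely, a nontrivial element $u^+ \in R_u(Q^+)$ fixes a horoball-like subtree (a "half-tree'' or subtree converging to the attracting end $\xi^+$ of $Q^+$), and the "depth'' at which this fixed subtree begins is controlled by how close $u^+$ is to the identity in $H$; a unipotent element of large norm is far from the identity, so its fixed subtree is a small neighborhood of the end $\xi^+$. Likewise $F^-$ is a small neighborhood of the opposite end $\xi^-$. Since $\xi^+ \ne \xi^-$, once both neighborhoods are small enough they must be disjoint, giving $F^+ \cap F^- = \emptyset$. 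To make this quantitative I would work in the apartment of $X_H$ corresponding to $\bfS'$: $\bfS'(\lq)$ acts by translations on a line in $X_H$ whose two ends are $\xi^\pm$, the root group elements of $R_u(Q^\pm)$ act as "folding'' isometries, and one can read off the fixed set of $u^\pm$ from the valuation of its matrix entries — large norm (large degree in $t$, equivalently large negative $t^{-1}$-valuation) pushes the fixed set out to the respective end. This is exactly the content behind~\cite[Proposition 25]{Se} and the surrounding structure theory for rank-one groups acting on trees.

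The main obstacle I anticipate is the quantitative statement that $F^\pm$ genuinely recedes to the end $\xi^\pm$ as the norm increases, rather than merely being some nonempty proper subtree — one needs that the two receding families are nested toward \emph{distinct} ends and eventually separate. This requires being careful about: (i) identifying the end of $X_H$ fixed by all of $R_u(Q^+)$ (it is the end stabilized by $\bfQ^+$, which is opposite to the end stabilized by $\bfQ^-$, hence distinct); and (ii) a distance estimate of the form "$d(x_0, F^+) \to \infty$ as $\|u^+\| \to \infty$'' for a fixed basepoint $x_0$ on the $S'$-axis, which comes from writing $u^+$ in a root-group coordinate and relating the $t^{-1}$-valuation of that coordinate to the combinatorial distance in the tree. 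Granting these, the conclusion is immediate: pick the basepoint $x_0$ on the axis between $\xi^+$ and $\xi^-$, choose $u^+$ with $d(x_0, F^+)$ large and $u^-$ with $d(x_0, F^-)$ large, and since $F^+$ lies beyond a far point toward $\xi^+$ while $F^-$ lies beyond a far point toward $\xi^-$, they cannot meet. Then Lemma~\ref{l;aniso-torus} finishes it, and $b = u^+u^-$ is the desired hyperbolic element of $\bfH(\bbz[t])$.
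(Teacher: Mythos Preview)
Your proposal is correct and takes essentially the same approach as the paper: choose $u^\pm$ of large norm via the previous lemma, argue along the $S'$-axis that the fixed sets $F^\pm$ separate toward the opposite ends $\xi^\pm$ and hence are disjoint, then apply Lemma~\ref{l;aniso-torus}. The paper's write-up is terser---it deduces $F^+\cap F^-=\emptyset$ directly from convexity of $F^\pm$ once $\ell'\cap F^+$ and $\ell'\cap F^-$ are disjoint half-lines of the axis $\ell'$---but the underlying idea is the same as your horoball picture.
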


\begin{proof}
Let $\ell ' \subseteq X_H$ be the geodesic corresponding to $S'$, and choose $u^\pm \in R_u(\bfQ^\pm)(\bbz[t])$ of sufficient norm such that $\ell ' \cap F^+$ is disjoint from $\ell ' \cap F^-$.
Since $F^+$ and $F^-$ are convex, and $\ell ' -( F^+ \cup F^-)$ is the geodesic between them, it follows that $ F^+\cap F^- =\emptyset$. Now apply Lemma~\ref{l;aniso-torus}.
\end{proof}

\section{Construction of cycles in $X$ near $\Gamma$}\label{sec;cyc}

Let $b \in \bfH(\bbz[t])$ be as in Lemma~\ref{l;b}, and let $\bfS''$ be the $\lq$-split one dimensional torus corresponding to the axis of $b$ in $X_H$. Define the $\lq$-split torus $\bfA=\langle\bfS'',\bfT\rangle \leq \bfP$ and let $A=\bfA(\lq).$ Let $\mathcal{A}$ denote the apartment in $X$ corresponding to $A.$ 

Recall that any unbounded element $a \in T$ translates $\mathcal{A}$, and that the axis for the translation is any geodesic in $\mathcal{A}$ that joins $P$ with its opposite parabolic $P^{op},$ as usual $P^{op}={\bf P}^{op}(\lq)$ where ${\bf P}^{op}$ is the oppositie parabolic containing $Z_\bfG(\bfT).$

Note that $b$ acts by translation on $\mathcal{A}$. In fact, $b$ translates orthogonal to any geodesic in $\mathcal{A}$ that joins $P$ with $P^{op}$.
Indeed, choose an element $w$ of the Weyl group with respect to $\bf A$ that reflects through a geodesic joining $\bf P$ and ${\bf P}^{op}$. Thus $w$ fixes both parabolic groups, and their common Levi subgroup, and hence $\bf H$. Since ${\bf S' }= {\bf A} \cap {\bf H}$, $w$ fixes $\bf S'$ and thus fixes any axis for $b$ in $\mathcal{A}$. Therefore, either $b$ translates orthogonal to any geodesic in $\mathcal{A}$ that joins $P$ with ${ P}^{op}$, or else $b$ translates along a geodesic in $\mathcal{A}$ that joins $P$ with ${ P}^{op}$.
The latter option would contradict Lemma~\ref{l;mahler} since for any $e \in \mathcal{A}$, we have $\Gamma b^n e=\Gamma e \in \G \backslash X$ and yet there is an unbounded $a \in T$ such that the ray determined by $a^n e$ is parallel to the ray determined by $b^n e$ and yet $a^{-n}ua^n \to 1$ either for any $u \in {\bf U}(\bbz [t])$ or for any $u$ in the $\bbz [t]$-points of the unipotent radical of ${\bf P}^{op}$.

\bigskip

The spherical Tits building for $G$ and $X$ is a graph, and the apartment $\mathcal{A}$ corresponds to a circle in the spherical Tits building. Suppose this circle has vertices $P_1,\ldots,P_n$ and edges $Q_1,...,Q_n$ where each $\bfP_i$ is a maximal proper $\lq$-parabolic subgroup of $\bfG$ containing $\bfA$, each $\bfQ_i$ is a minimal $\lq$-parabolic subgroup of $\bfG$ containing $\bf A$, and $\bfP_1=\bfP$. We further assume that mod $n$, the edge $Q_i$ has vertices $P_{i}$ and $P_{i+1}$.

Notice that $\bfU \leq \bfQ_1 \cap \bfQ_n$ since $\bfP=\bfP_1$  contains both $\bfQ_1$ and $ \bfQ_n$. That is, any element of $\bfU (\bbz)$ fixes the edges $Q_1$ and $Q_n$.

Let $\bfU_1$ be the root group corresponding to the half circle that contains $Q_1$ but not $Q_2$, so that  $\bfU_1 \leq \bfU$ but $\bfU_1 \cap \bfQ_2=1$. 
Let $\bfU_n$ be the root group corresponding to the half circle that contains $Q_n$ but not $Q_{n-1}$, so that  $\bfU_n \leq \bfU$ but $\bfU_n \cap \bfQ_{n-1}=1$. 

It follows that $\bfU-\bfQ_i $ has codimension in $\bfU$ at least $1$ for $i=2,n-1$. Since 
 $\bfU(\bbz)$ is Zariski dense in $\bfU$, there is some $u \in \bfU(\bbz) - (\bfQ_{2} \cup \bfQ_{n-1}) $. It follows that $u$ fixes the edges $Q_n$ and $Q_1$, but no other edges in the circle corresponding to $\mathcal{A}$. 
 
 Since $u$ is a bounded element of $G$, it fixes a point in $X$. Therefore, $u$ fixes a geodesic ray in $X$ that limits to an interior point of the edge corresponding to ${Q}_1$ in the spherical building. Any such geodesic ray must contain a point in $\mathcal{A}$, which is to say that $u$ fixes a point in $\mathcal{A}$.
 
 \bigskip
 
 Define a height function $q:\mathcal{A} \rightarrow \bbr$ such that the pre-image of any point is an axis of translation for $b$, such that $s \leq t$ if and only if any geodesic ray in $\mathcal{A}$ that eminates from $q^{-1}(s)$ and limits to $P$ contains a point from $q^{-1}(t)$.

Let $F=\{\, x \in \mathcal{A} \mid ux=x \,\}$, let $I=\inf _{f \in F} \{\, q(f)  \,\}$, and let  $E=\{\,f \in F \mid q(f)=I \,\}$. Since the fixed set of $u$ in the circle at infinity of $\mathcal{A}$ equals the union of the two edges $Q_1$ and $Q_n$, and since $F$ is convex, $I$ exists and $E$ is either a point of, a subray of, a line segment of, or an entire axis of translation for $b$.

Notice that $E$ is bounded, otherwise $u$ would fix the point at infinity that a subray of $E$ limited to. This point at infinity would have distance $\pi /2$ from the vertex $P$ in the spherical metric, but this is not possible as the previously identified fixed set of $u$ in the boundary circle is centered at $P$ and has radius at most $\pi /3$. (The bound $\pi/3$ is realized exactly when the root system for $G$ is of type $A_2$.) Thus $E$ is either a point or a compact interval.

\bigskip

 Since the fix set of $u$ in the boundary circle is exactly the union of ${Q}_1$ and ${ Q}_n$, and since $F$ is convex, $F$ is precisely the union of all geodesic rays eminating from points in $E$ and limiting to points in the arc ${ Q}_1 \cup { Q}_n$. That is $F$ is a polyhedral region in $\mathcal{A}$ that is symmetric with respect to a reflection of $\mathcal{A}$ through a geodesic that limits to ${ P}$ and the opposite point of ${ P}$. If $E$ is a point, then $F$ has two geodesic rays as its boundary: one ray that limits to ${ P}_2$, and the other that limits to  ${ P}_n$. If $E$ is a nontrivial interval, then the boundary of $F$ is the union of $E$, a ray from an endpoint of $E$ that limits to  ${ P}_2$, and a ray from the other endpoint of $E$ that limits to ${ P}_n$.

If $E$ is an interval, we label its endpoints $e^+$ and $e^-$ such that $E$ is both oriented in the direction of translation of $b$, and in the direction towards $e^+$, and away from $e^-$. Let $e_0$ be the midpoint of $E$.
If $E$ is a point, then $e_0=e^+=e^-$ is that point.

 For $n_0$ sufficiently large and for any $n \geq n_0$, we define $\sigma _n \subseteq \mathcal{A}$ as the geodesic segment between $b^{-n}e^+$ and $b^{n}e^-$. Notice that $b^{-n}e^+$ is the only point in $\sigma_n$ that is fixed by $g_n=b^{-n}ub^n$, and that  
$b^{n}e^-$ is the only point in $\sigma_n$ that is fixed by $h_n=b^{n}ub^{-n} $.

\bigskip

Recall that $\mathcal{A}$ is the apartment corresponding to $A$ 
and $\bfT\subset\bfA$ is a $\bbq$-split one dimensional torus of $\bfG.$ Recall also that $\bfP=\bfU Z_\bfG(\bfT).$ Let $a\in T$ be such that  $a^{-n}ua^{n}\to 1$ as $n\to \infty$ so that $a^ne_0$ converges to the cell at infinity corresponding to $ P$ as $n\rightarrow\infty.$

Let $\Delta _n$ be the triangle with one face equal to  $\sigma _n$, a second face contained in the boundary of $b^{-n} {\rm{Fix}}_\mathcal{A}(u)={\rm{Fix}}_\mathcal{A}(g_n)$, a third face contained in the boundary of $b^{n} {\rm{Fix}}_\mathcal{A}(u)={\rm{Fix}}_\mathcal{A}(h_n)$, and vertices $b^ne^-$, $b^{-n}e^+$, and a uniquely determined point $y_n \in  \partial {\rm{Fix}}_\mathcal{A}(g_n) \cap  \partial {\rm{Fix}}_\mathcal{A}(h_n)$. Thus $y_n $ converges to the cell at infinity corresponding to $ P$ as $n\to \infty.$

Note that 
\begin{enumerate}
\item $\bfU$ is a unipotent group so $[[[[g_n, h_n] ,\cdots ],h_n],h_n]=1$ for some fixed number of nested commutators that's independent of $n$.
\item If $w$ is a word in $\{g_n,h_n,g_n^{-1},h_n^{-1}\}$ and $d\in\{g_n,h_n,g_n^{-1},h_n^{-1}\},$ then $w\sigma _n$ and $wd\sigma _n$ are incident.
\end{enumerate} 
(1) and (2) imply that the word $[[[[g_n, h_n], \cdots ],h_n],h_n]$ (or possibly a subword) describes a 1-cycle that is the union of translates of $\sigma _n$ by subwords of $[[[[g_n, h_n], \cdots ],h_n],h_n]$. We name this $1$-cycle $c_n$.

The cone of $c _n$ at the point $y_n$ is the topological image of a $2$-disk $\phi _n: D^2 \rightarrow X$ such that $\phi _n (\partial D^2)=c_n$.

If we let $$X_0=\G\sigma_{n_0}$$ then clearly $c_n \in X_0$ for all $n$ since $b,g_n,h_n \in \G$ and $\sigma _n \subseteq \langle b \rangle \sigma_{n_0}$.

\section{Proof of Theorem~\ref{t;main}}\label{sec;proof}

We choose
a $\G$-invariant and cocompact space $X_i \subseteq X$
to satisfy the inclusions
$$X_0 \subseteq X_1 \subseteq X_2 \subseteq ... \subseteq
\cup_{i=1}^\infty X_i =X$$

In our present context, Brown's criterion takes on the following
form \cite{Brown filtration}

\begin{bc} By Lemma~\ref{l;stab}, the group $\G$ is not of type
$FP_{2}$ (and hence not finitely presented) if for any $i \in \mathbb{N}$, there exists some class
in the homology group $\widetilde{\text{H}}_{1} (X_0 \,,\,
\mathbb{Z})$ which is nonzero in $\widetilde{\text{H}}_{1} (X_i
\,,\, \mathbb{Z})$.
\end{bc}

Since $\G\backslash X_i$ is compact it follows from Lemma~\ref{l;mahler} that for any $i$ there there exists some $j_i$ such that $a^{j_i}e_0\not\in X_i.$ Choose $n$ sufficiently large so that $a^{j_i}e_0 \in \Delta _n \subseteq \phi _n$. Recall that $c_{n}\subseteq X_0$.  Since $X$ is contractible and 2-dimensional, any filling disk for $c_{n}$ must contain $a^{j_i}e_0$. That is, $c_n$ represents a nontrivial class in the homology of $X-\{a^{j_i}e_0 \}$, and hence is nontrivial in the homology of $X_i$.

\section{Other ranks}

The proof of Proposition 4.1 in \cite{BW} gives a short proof that ${\bf{SL_2}}(\mathbb{Z} [t])$ is not finitely generated by examining the action of ${\bf{SL_2}}(\mathbb{Z} [t])$ on the tree for ${\bf{SL_2}}(\mathbb{Q}((t^{-1})))$. Replacing some of the remarks for ${\bf{SL_2}}(\mathbb{Z}[t])$ in that paper with straightforward analogues from lemmas in this paper, it is easy to see that the proof in \cite{BW} applies to show that if  $\bf H$ is a connected, noncommutative, absolutely  almost simple algebraic $\bbq$-group of $\bbq$-rank $1$, then ${\bf H}(\bbz [t])$ is not finitely generated.

It seems natural to state the following

\begin{conjecture}
Suppose $\bf H$ is a connected, noncommutative, absolutely  almost simple algebraic $\bbq$-group whose $\bbq$-rank equals $k$. Then ${\bf H}(\bbz [t])$ is not of type $F_k$ or $FP_k$.
\end{conjecture}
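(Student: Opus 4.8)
The natural plan for proving the Conjecture is to argue by induction on $k$, using this paper for $k\le 2$ (the case $k=1$ being the remark above) and, at each step, imitating the architecture of the proof of Theorem~\ref{t;main}; the argument should parallel the treatment of $\SL_{k+1}$ in \cite{BMW}, which already handles the rank-$k$ case of the Conjecture for that group. Fix simple $\bbq$-roots $\alpha_1,\dots,\alpha_k$ for $(\bfG,\bfS)$, let $\bfT=\big(\bigcap_{i\ge 2}\ker\alpha_i\big)^\circ$ be the resulting one-dimensional subtorus, let $\bfP=\bfU Z_\bfG(\bfT)$ be the associated maximal $\bbq$-parabolic with unipotent radical $\bfU$, and write $Z_\bfG(\bfT)=\bfH\bfM\bfT$ with $\bfM$ a $\bbq$-anisotropic central torus and $\bfH$ semisimple of $\bbq$-rank $k-1$. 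Since $\bfH$ need not be absolutely almost simple, the induction must really be run for semisimple $\bbq$-groups, the reduction to almost simple factors going through the behaviour of finiteness properties under direct products --- which appears to require proving, alongside the negative statement, that $\bfH(\bbz[t])$ \emph{is} of type $FP_{k-2}$, a positive assertion of comparable difficulty.

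Two ingredients carry over with essentially no change. First, exactly as in Lemma~\ref{l;stab}, the $\G$-stabilizer of any cell of the $k$-dimensional building $X$ of $G=\bfG(\lq)$ is, up to finite index, an arithmetic group and hence $FP_m$ for all $m$; thus Brown's criterion applies, and it suffices to produce, for every $\G$-cocompact $X_i\subseteq X$ with $\bigcup_i X_i=X$, a class in $\widetilde H_{k-1}(X_0)$ that is nonzero in $\widetilde H_{k-1}(X_i)$. Second, the Mahler-type Lemma~\ref{l;mahler} gives, for a nontrivial $u\in\bfU(\bbz)$ (which exists since $\bfU(\bbz)$ is Zariski dense in $\bfU$) and an unbounded $a\in T$ with $a^{-n}ua^{n}\to 1$ --- available because $\bfT$ acts on $\bfU$ with nonzero weights --- indices $j_i$ with $a^{j_i}e_0\notin X_i$, the points $a^{j_i}e_0$ running off to the chamber at infinity determined by $\bfP$.

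The substance lies in the construction of the $(k-1)$-cycles. The inductive hypothesis supplies, inside the $(k-1)$-dimensional building $X_H$ of $\bfH(\lq)$, a sequence of $(k-2)$-dimensional ``sheets'' $z_n$ whose fillings $\psi_n$ --- unique, since $X_H$ is contractible and $(k-1)$-dimensional --- are forced to exit every $\bfH(\bbz[t])$-cocompact subset. One transports this picture into $X$ through the $Z_\bfG(\bfT)(\lq)$-stable subbuilding of $X$ modelled on $X_H$, and then ``suspends'' each $z_n$ across the $\bfU$-direction: using a bounded $u\in\bfU(\bbz)$ whose fixed set in a well-chosen apartment of $X$ is a polyhedral region pointing toward exactly the face at infinity of $\bfP$, together with the contracting action of $a\in T$, one manufactures conjugates $g_n,h_n,\dots$ of $u$ and a system of higher commutator/Steinberg-type relations in the nilpotent group $\bfU$ whose geometric realization is a $(k-1)$-cycle $c_n$, assembled by $\G$-translating a $(k-1)$-dimensional slab $\sigma_n$ (morally $\psi_n$ crossed with a long geodesic segment in the torus direction) along subwords of those relations, so that $c_n\subseteq X_0:=\G\sigma_{n_0}$ for all $n$. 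Coning $c_n$ at the corner point $y_n\in\mathcal{A}$, which runs off toward $\bfP$, yields a $k$-chain $\phi_n$ filling $c_n$ and, for $n$ large, containing $a^{j_i}e_0$; the dimension argument from the proof of Theorem~\ref{t;main} --- in a contractible $k$-dimensional complex a $(k-1)$-cycle has at most one $k$-chain filling --- then shows $c_n$ cannot bound in $X_i$, and Brown's criterion gives that $\G$ is not $FP_k$, hence not $F_k$.

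The main obstacle is the suspension step. In the $k=2$ case the relevant relation in $\bfU$ is a single iterated commutator turning a geodesic segment into a $1$-cycle, and the torus $\bfT$, the root structure of $\bfU$, and the tree $X_H$ are simple enough that the incidence property (item (2) of Section~\ref{sec;cyc}) and the location of $\phi_n$ can be checked by hand. For $k\ge 3$ one must instead combine the nilpotence of $\bfU$ --- whose $\bfT$-graded root structure and whose $\bfH$-action become genuinely involved --- with the $(k-2)$-dimensional sheets $z_n$ coming from $\bfH$, and show that these data glue, along matching faces, into an honest $(k-1)$-cycle $c_n$ whose cone-filling one can control well enough to locate the escaping points $a^{j_i}e_0$ inside it. Finding the correct higher-dimensional analogue of ``relations among relations'' adapted to the pair $(\bfU,\bfH)$, and carrying out the attendant bookkeeping --- which cocompact piece $X_i$, which exponent $n$, which escaping point --- is where essentially all the work lies, and is the step I would expect to be hardest.
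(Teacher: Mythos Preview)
The statement you are addressing is a \emph{conjecture}, and the paper does not prove it. The paper establishes only the case $k=2$ (Theorem~\ref{t;main}) and, in the remarks of Section~8, the case $k=1$; it then records that the conjecture is known for $K=\mathbb{Q}$ and $\bfH=\SL_n$ by \cite{BMW}, and leaves the general statement open. There is therefore no proof in the paper against which to compare your proposal.

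Your write-up is, appropriately, not a proof but a research outline, and you are candid about this: you flag that the induction forces you to work with semisimple rather than almost simple Levi factors, that handling direct products seems to require a companion \emph{positive} finiteness result (that $\bfH(\bbz[t])$ is of type $FP_{k-2}$), and that the ``suspension step'' --- promoting the $(k-2)$-cycles in $X_H$ to $(k-1)$-cycles in $X$ via relations in the graded nilpotent group $\bfU$ --- is where the real difficulty lies. These are exactly the obstacles that make the statement a conjecture rather than a theorem. In particular, the passage from a single iterated commutator in $\bfU$ (which suffices when $k=2$) to a coherent system of higher relations whose geometric realization is a $(k-1)$-cycle is not a matter of bookkeeping: one needs an explicit combinatorial model for such cycles adapted to the root-space decomposition of $\bfU$ under $\bfT$ and compatible with the $\bfH$-action, and no such model is supplied here. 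Until that step is made precise, the proposal remains a plausible strategy rather than a proof.
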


The conjecture has been verified when $K=\mathbb{Q}$ and ${\bf H}={\bf SL_n}$ \cite{BMW}.

%%%%%%%%%%%%%%%%%%%%%%%%%%%%%%%%%%%%%%%%%%%%%%%%%%%%%%%%%%%%%%%%%%%%%%%%%%%

\end{document}